\title{Stable maps to rational curves and the relative Jacobian}
\author{Steffen Marcus and Jonathan Wise}
\date{\today}
\newcommand{\oMct}{\overline{M}\vphantom{M}^{\rm ct}}
\newcommand{\tsD}{\widetilde{\mathscr{D}}}
\newcommand{\fP}{\mathfrak{P}}
\newcommand{\sE}{\mathscr{E}}
\newcommand{\tensor}{\mathop{\otimes}}
\DeclareMathOperator{\uIsom}{\underline{Isom}}
\DeclareMathOperator{\uHom}{\underline{Hom}}
\newcommand{\sU}{\mathscr{U}}
\newcommand{\sT}{\mathscr{T}}
\newcommand{\sV}{\mathscr{V}}
\newcommand{\oM}{\overline{M}\vphantom{M}}
\newcommand{\sP}{\mathscr{P}}
\newcommand{\tsP}{\widetilde{\sP}}
\newcommand{\bP}{\mathbf{P}}
\newcommand{\Gm}{\mathbf{G}_m}
\newcommand{\sD}{\mathscr{D}}
\newcommand{\BGm}{\mathcal{B} \Gm}
\newcommand{\fM}{\mathfrak{M}}
\newcommand{\fJ}{\mathfrak{J}}
\newcommand{\fZ}{\mathfrak{Z}}
\newcommand{\tP}{\widetilde{P}}
\newcommand{\cO}{\mathcal{O}}
\newcommand{\sA}{\mathscr{A}}
\newcommand{\bA}{\mathbf{A}}
\newcommand{\bx}{\mathbf{x}}
\newcommand{\sL}{\mathscr{L}}
\newcommand{\sM}{\mathscr{M}}
\newcommand{\id}{\mathrm{id}}
\newcommand{\sQ}{\mathscr{Q}}
\newcommand{\DR}{{\bf DR}}
\newcommand{\DRsp}{\mathrm{DR}}
\newcommand{\targets}{\sV}
\newtheorem*{theorem}{Theorem}
\newtheorem{proposition}{Proposition}
\newtheorem{lemma}[proposition]{Lemma}
\newtheorem{corollary}{Corollary}
\numberwithin{corollary}{proposition}
\numberwithin{proposition}{section}
\theoremstyle{remark}
\theoremstyle{definition}
\newtheorem*{construction}{Construction}
\begin{document}

\begin{abstract} 
We consider two cycles on the moduli space of compact type curves and prove that they coincide.  The first is defined by pushing forward the virtual fundamental classes of spaces of relative stable maps to an unparameterized rational curve, while the second is obtained as the intersection of the Abel section of the universal Jacobian with the zero section.  Our comparison extends results of \cite{CMW} where the same identity was proved over on the locus of rational tails curves.
\end{abstract}

\maketitle
\section{Introduction}
Let $\oM(\sP/\BGm)$ denote the moduli space of relative stable maps to a ``rubber'' target---that is, to a chain of unparameterized rational curves with marked points $0$ and $\infty$ on the end components---and $[ \oM(\sP/\BGm)]^{\rm vir}$ its virtual fundamental class.  Write $\oM$ for the stack parameterizing stable marked curves equipped with a neutral integral weighting, by which we mean a weighting of its marked points by a vector of integers $\bx$ of sum zero.  Denote by $\oM^{\rm ct}$ and $\oM^{\rm rt}$ respectively the loci of compact type curves and rational tails curves in $\oM$.  We employ the same decorations to refer to the appropriate substacks of $\oM(\sP/\BGm)$.

Our goal is to compare two natural cycles on $\oM^{\rm ct}$.  One one hand, there is a morphism $\Pi : \oM^{\rm ct}(\sP/\BGm) \rightarrow \oM^{\rm ct}$ through which we can push forward the virtual fundamental class.  It is defined by sending a stable map to the Deligne--Mumford stabilization of its source curve.  On the other hand, we have a canonical section of the relative Jacobian of the universal curve over $\oM^{\rm ct}$, given generically by $\sigma:(C,\{p_i\})\mapsto \sum_i x_i[p_i]$,  and we may take the intersection of the image of this section and the zero section to obtain a cycle class $\DR$.  We write $\DRsp$ for the scheme theoretic intersection, on which $\DR$ is supported.  These cycles have gone---somewhat inconsistently---by various names in the literature, including the \emph{double Hurwitz class} and the \emph{double ramification cycle}.

In~\cite{CMW}, we showed that these two classes agree \textit{over the rational tails locus}. 
Here we will extend this result to \emph{compact type} curves:
\begin{theorem}
\begin{equation*}
\Pi_\ast [ \oM^{\rm ct}(\sP/\BGm) ]^{\rm vir} = \DR.
\end{equation*}
\end{theorem}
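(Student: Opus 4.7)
The plan is to factor both sides of the equality through a common Picard-type moduli problem over $\oM^{\rm ct}$ and then reduce the comparison to the rational-tails case of \cite{CMW} via a boundary stratification. The new difficulty in passing from $\oM^{\rm rt}$ to $\oM^{\rm ct}$ is that over strata carrying higher-genus components the relative Jacobian has positive-dimensional fibers, so both the Abel-zero intersection and the pushforward $\Pi_\ast [\oM^{\rm ct}(\sP/\BGm)]^{\rm vir}$ acquire excess contributions that must be matched node by node.

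First I would construct a universal Picard stack $p : \fP \to \oM^{\rm ct}$ parameterizing line bundles on the universal curve of the appropriate multidegree, together with a morphism $\alpha : \oM^{\rm ct}(\sP/\BGm) \to \fP$ that sends a rubber stable map $f : C' \to \sP$ to the Deligne--Mumford stabilization of $C'$ equipped with a twist of $f^\ast \cO(\infty - 0)$ correcting for the components contracted by stabilization. The Abel section gives a second morphism $\sigma : \oM^{\rm ct} \to \fP$, and $\DR$ is by construction the refined intersection of $\sigma$ with the zero section of $\fP$. Since $\Pi = p \circ \alpha$, the theorem becomes equivalent to the equality, as classes on $\fP$, of $\alpha_\ast [\oM^{\rm ct}(\sP/\BGm)]^{\rm vir}$ with the refined intersection of $\sigma(\oM^{\rm ct})$ and the zero section.

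Next I would compare perfect obstruction theories: the relative obstruction theory of $\alpha$ should agree with the one defined by the two sections of $\fP \to \oM^{\rm ct}$ at their intersection. Over $\oM^{\rm rt}$ this matching is essentially the content of \cite{CMW}. To extend it to all of $\oM^{\rm ct}$ I would stratify by dual graph: on each boundary stratum indexed by a tree $\Gamma$, both $\fP$ and $\oM^{\rm ct}(\sP/\BGm)$ decompose via clutching into products indexed by the vertices of $\Gamma$, reducing the comparison to a vertex-by-vertex rational-tails problem handled by \cite{CMW}.

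The principal obstacle I anticipate is verifying that this clutching decomposition is compatible with the virtual class on the left and with the refined intersection on the right. Concretely, one must check that the excess normal bundle of the Abel-zero intersection contributed by a separating node---namely a product of Jacobian factors coming from the two sides of the node---is matched on the stable-maps side by the corresponding piece of the virtual normal bundle of $\alpha$. I expect this to follow from a local analysis of the obstruction theory of rubber stable maps near a separating node combined with the properness of the Jacobian over the compact-type locus, but it is the step most dependent on the compact-type hypothesis and the one most likely to require genuine new work beyond what \cite{CMW} already provides.
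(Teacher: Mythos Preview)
Your approach diverges substantially from the paper's, and the route you outline has a genuine obstacle.  The paper does \emph{not} stratify by dual graph or reduce vertex-by-vertex to the rational-tails case.  Instead it introduces the intermediate moduli space $\oM^{\rm ct}(\sP)$ (rubber maps \emph{without} the $\BGm$-action on the target, subject to a degree-zero condition~\ref{cond:degree}) and shows that the natural map $\oM^{\rm ct}(\sP)\to\fJ$ factors through the twisted Abel section $\oM^{\rm ct}\to\fJ$.  This yields a cartesian diagram
\[
\xymatrix{
\oM^{\rm ct}(\sP/\BGm) \ar[r]\ar[d] & \DRsp \ar[r]\ar[d] & \fZ \ar[d] \\
\oM^{\rm ct}(\sP) \ar[r] & \oM^{\rm ct} \ar[r] & \fJ
}
\]
in which the bottom-left arrow is birational (this is already in~\cite{CMW}, and holds over all of $\oM$, not just $\oM^{\rm rt}$).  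Costello's pushforward theorem~\cite[Theorem~5.0.1]{Costello} then finishes the proof in one stroke, with no stratification and no excess-bundle bookkeeping.

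The specific gap in your plan is the clutching decomposition of $\oM^{\rm ct}(\sP/\BGm)$.  Over a boundary stratum with tree $\Gamma$, the source curve does split according to the vertices of $\Gamma$, but the rubber target does \emph{not}: all components of $C$ map to a single expanded chain $\tsP$, and different vertices of $\Gamma$ can land on the same component of $\tsP$.  So $\oM^{\rm ct}(\sP/\BGm)$ restricted to the stratum is not a product $\prod_v \oM_{g_v}(\sP/\BGm;\bx_v)$ but a more complicated fiber product over combinatorial distribution data, and the virtual class does not factor accordingly without further argument.  Even granting this, matching the stratum-wise excess contributions to reassemble a global cycle equality is exactly the kind of delicate normal-bundle accounting that the paper's use of Costello's theorem is designed to avoid.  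The idea you are missing is precisely the role of $\oM(\sP)$ as a birational cover of $\oM^{\rm ct}$ through which the Jacobian map factors.
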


The moduli space $\oM(\sP/\BGm)$ carries discrete invariants given by the genus $g$ of the source curve and the $n$ marked pre-images of $0$ and $\infty$ along with their weights, encoded in a vector $\bx$.  Fixing $g$ and $\bx$ distinguishes a component $\oM_g(\sP/\BGm;\bx)$ parameterizing stable maps to an unparameterized rational curve relative to two marked points, with ramification profiles prescribed by the positive and negative parts of $\bx$ respectively.  This gives a compactification of the space of Hurwitz covers associated to the same data, and admits a branch divisor morphism, the degree of which is the double Hurwitz number $H_g(\bx)$.  Double Hurwitz numbers are piecewise polynomials, controlled by a rich combinatorial structure described in \cite{GJV:DHN,SSV,CJM}.  The cycle $\Pi_\ast[\oM_g(\sP/\BGm;\bx)]^{\rm vir}$ was introduced in~\cite{GVearly,GJV} as a geometric tool by which to analyze the relationship between double Hurwitz numbers and the geometry of $\oM_{g,n}$.  It was shown in \cite{FP} to be tautological.  Most recently, Buryak--Shadrin--Spitz--Zvonkine \cite{BSSZ} have computed the intersection numbers of any monomial of psi classes against this cycle.

Independently, Eliashberg has sought a better understanding of the double ramification cycle for use in symplectic field theory~\cite{EGH}.  This has led various authors to the construction of $\DR_{g,n}$ described above via intersections on the relative Jacobian.  Techniques of Hain \cite{H} and, independently, Grushevsky--Zakharov~\cite{GZ1}, provide a remarkable computation as a polynomial of degree $2g$ in the $x_i$ with coefficients in the tautological ring.  The limitations inherent in using the relative Jacobian, however, restrict $\DR_{g,n}$ to the compact type locus.  Recent advances of Grushevsky--Zakharov \cite{GZ2} have extended this slightly.  We hope to compare these parallel approaches to the double ramification cycle on their common domain of definition; the present theorem provides this comparison up to compact type.

We happily acknowledge some very helpful comments and suggestions of R.\ Pandharipande and D.\ Zvonkine, not least their encouragement that we return to this question.

\section{Rubber targets}\label{sec:expansions}
\label{sec:targets}

In this section we briefly outline the construction of the universal family of expansions for stable maps to rubber targets.

\subsection{The rubber rational curve}\label{sec:rubberP1}
Let $\sP = [ \bP^1 / \Gm ]$ where $\Gm$ acts on $\bP^1$ by scaling the first coordinate.  We can also realize $\sP$ as the complement of the closed point in $\sA^2$, where $\sA = [\bA^1 / \Gm]$.  Each of $\sP$, $\sA$ and $\bP^1$ have modular interpretations:
\begin{itemize}
\item $\sA$ is the moduli space of pairs $(L, s)$ where $L$ is a line bundle and $s$ is a section of $L$;
\item $\bP^1$ is the moduli space of triples $(L, s, t)$ where $s$ and $t$ are sections of the line bundle $L$ that do not vanish simultaneously;
\item $\sP$ is the moduli space of tuples $(L, s, M, t)$ where $L$ and $M$ are line bundles and $s$ and $t$ are, respectively, sections of $L$ and $M$ that do not vanish simultaneously.  
\end{itemize}
We write $\sL$ and $\sM$ for the universal line bundles over $\sP$.  The vanishing loci of $s$ and $t$ are divisors on $\sP$, which we denote $\sD_-$ and $\sD_+$.

The quotient map $\bP^1 \rightarrow \sP$ realizes $\bP^1$ as a $\Gm$-torsor over $\sP$.  

\begin{proposition}
Over $\sP$, the $\Gm$-torsors $\bP^1$ and $\uIsom(\sL,\sM)$ 
are isomorphic.
\end{proposition}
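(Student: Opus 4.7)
Plan: My approach is to build the isomorphism directly from the modular interpretations provided in the preceding paragraphs.

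First, I would construct a canonical $\sP$-morphism $\bP^1 \to \uIsom(\sL, \sM)$ as follows. A $T$-point of $\bP^1$ is a triple $(L, s, t)$, and the tautological projection $\bP^1 \to \sP$ sends it to the tuple $(L, s, L, t)$. In particular, the pullbacks of both $\sL$ and $\sM$ to $\bP^1$ are canonically identified with the same line bundle $L$, so the identity morphism $L \to L$ furnishes a tautological section of $\uIsom(\sL, \sM)$ over $\bP^1$, i.e., the desired morphism of $\sP$-stacks.

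Second, I would observe that both $\bP^1$ and $\uIsom(\sL, \sM)$ are $\Gm$-torsors over $\sP$: the former by the quotient presentation given above, the latter because $\uIsom$ of two line bundles is naturally the frame bundle of $\sL^{-1} \tensor \sM$. Since any $\Gm$-equivariant morphism of $\Gm$-torsors is automatically an isomorphism, it remains only to verify equivariance. This amounts to tracing the $\Gm$-action $\lambda \cdot (L, s, t) = (L, \lambda s, t)$ on $\bP^1$ through the groupoid isomorphism $(\lambda, \id) \colon (L, s, L, t) \to (L, \lambda s, L, t)$ in $\sP$, and comparing with the action by scalar multiplication on $\uIsom(\sL, \sM)$.

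There is no serious conceptual obstacle—only careful bookkeeping of the groupoid morphisms in $\sP$ and the conventions for the two $\Gm$-actions. The key point is simply that under the presentation $[\bP^1/\Gm] \simeq \sP$, the universal line bundles $\sL$ and $\sM$ are tautologically identified after pullback to $\bP^1$, and this identification is exactly what picks out the isomorphism of torsors.
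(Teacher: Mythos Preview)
Your proposal is correct and follows essentially the same approach as the paper: both construct the $\sP$-morphism $\bP^1 \to \uIsom(\sL,\sM)$ by sending $(L,s,t)$ to $(L,s,L,t,\id_L)$, and then verify $\Gm$-equivariance by tracing the action through the groupoid isomorphism in $\sP$ that rescales the first copy of $L$. The paper carries out the equivariance computation explicitly, while you describe it in outline, but the content is identical.
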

\begin{proof}
Recall that $\Gm$ acts on a point $(L,s,t)$ of $\bP^1$ by scaling $s$.  We have the map
\begin{equation*}
\bP^1 \rightarrow \sP : (L, s, t) \mapsto (L, s, L, t) .
\end{equation*}  
An object of $\uIsom(\sL,\sM)$ is a tuple $(L, s, M, t, \varphi)$ where $(L,s,M,t) \in \sP$ and $\varphi : L \rightarrow M$ is an isomorphism of line bundles, and $\Gm$ acts by scaling $\varphi$.  We have a $\sP$-morphism
\begin{equation*}
\bP^1 \rightarrow \uIsom(\sL,\sM) : (L,s,t) \mapsto (L, s, L, t, \id_L) .
\end{equation*}
The action of $\lambda \in \Gm$ gives
\begin{equation*}
\lambda . (L, s, L, t, \id_L) = (L, s, L, t, \lambda \id_L) \simeq (L, \lambda s, L, t, \id_L)
\end{equation*}
where the isomorphism is scaling the first copy of $L$ by $\lambda^{-1}$.  This exhibits our desired isomorphism.
\end{proof}

The $\Gm$-torsor $\uIsom(\sL,\sM)$ has an associated line bundle \[\uHom(\sL,\sM) = \cO_{\sP}(\sD_+ - \sD_-),\] giving a map $\sP \rightarrow \BGm$ whose fiber is $\bP^1$.  More generally, if $S \rightarrow \BGm$ is a morphism classifying a line bundle $L$ over $S$, then the pullback of $\sP$ is the projective completion of the line bundle $L$.

\subsection{Expansions}\label{sec:expansions}

An expansion of $\sP$ is a chain
\begin{equation} \label{eqn:6}
\tsP = \sP \mathop{\amalg}_{\sD_- \simeq \sD_+} \sP \mathop{\amalg}_{\sD_- \simeq \sD_+} \cdots \mathop{\amalg}_{\sD_- \simeq \sD_+} \sP .
\end{equation}
The isomorphism $\sD_- \simeq \sD_+$ used for gluing is the canonical one by which $N_{\sD_- / \sP}$ corresponds to $N_{\sD_+ / \sP}^\vee$.  An expansion $\tsP$ has an associated chain of rational curves
\newlength{\inftywidth}
\settowidth{\inftywidth}{$\scriptstyle\infty$}
\newsavebox{\gluesub}
\savebox{\gluesub}{$\!\scriptstyle\infty\simeq\!$\makebox[\inftywidth]{$\scriptstyle0$}}
\begin{equation*}
\tP = \bP^1\!\mathop{\amalg}_{\usebox{\gluesub}} \bP^1\!\mathop{\amalg}_{\usebox{\gluesub}} \cdots \mathop{\amalg}_{\usebox{\gluesub}} \bP^1 \in\fM_{0,2}^{\rm ss}
\end{equation*}
with a quotient map $\tP\to\tsP$.
There is a universal flat family of expansions of $\sP$ (see \cite[Section~4]{ACFW}).  We use $\sP^{\exp}$ for the total space of the universal family and $\targets$ for its base.  This space is closely related to the universal family of chains of rational curves:  there is a canonical isomorphism $\fM_{0,2}^{\rm ss} \simeq \targets \times \BGm$.  

There is an \'etale cover of $\targets$ by stacks $\sA^n$ (see \cite[Section~8.3]{ACFW}).  The expansion of $\sP$ inducing the map $\sA^n \rightarrow \targets$ is the locally closed substack  $\sU_n \subset \sA^{2n + 2}$, with coordinates $(\sigma_0, \sigma_1, \ldots, \sigma_n, \tau_1, \ldots, \tau_n, \tau_{n+1})$ defined by the non-vanishing of the pair $(\sigma_i, \tau_j)$ for $i < j$ and the vanishing of the product $\sigma_i \tau_j$ for $i \geq j$.%
\footnote{We say that an object $(L,s)$ of $\sA$ vanishes if it is isomorphic to $(\cO,0)$.}
These coordinates are taken using the modular description of $\sA$ above, so each $\sigma_i = (L_i, s_i)$ where $L_i$ is a line bundle and $s_i$ is a section of $L_i$, and similarly for $\tau_i = (M_i, t_i)$.  The projection to the base is induced by restriction from
\begin{equation*}
\sA^{2n + 2} \rightarrow \sA^n : (\sigma_0, \ldots, \sigma_n, \tau_1, \ldots, \tau_{n+1}) \mapsto (\sigma_1 \tau_1, \ldots, \sigma_n \tau_n) .
\end{equation*}

To a function%
\footnote{We use $[n]$ to denote the set $\{ 1, \ldots, n \}$.}
$[n] \rightarrow [m]$ we may associate a map $f : \sA^n \rightarrow \sA^m$ where, if $\alpha_i$ denotes the $i$-th coordinate on $\sA^m$ or $\sA^n$, we have 
\begin{equation} \label{eqn:5}
f^\ast(\alpha_i) = \prod_{j \in f^{-1}(\alpha_i)} \alpha_j
\end{equation}
A morphism $f : \sA^n \rightarrow \sA^m$ commutes with the projections to $\targets$ if and only if it is associated as above to an order preserving injection:%
\begin{equation*}
\xymatrix{
\sA^n\ar[rr]\ar[dr]&&\sA^m\ar[dl]\\
&\targets&}
\end{equation*} 
These morphisms constitute a presentation of $\targets$ (see loc.\ cit.\ for more details).  On the level of universal families, the map induced by $f$ is given by the same formula~\eqref{eqn:5} with $\alpha$ replaced by $\sigma$ or $\tau$ and the convention that $u^{-1}(0) = \{ 0 \}$ and $u^{-1}(m+1) = \{ n+1 \}$.

\subsection{The automorphism group of an expansion}
\label{sec:aut-exp}

Let $\tsP$ be as in~\eqref{eqn:6}.  An automorphism of $\tsP$ consists of maps $\sP \rightarrow \sP$ preserving the divisors $\sD_-$ and $\sD_+$---of which there is exactly one---together with compatibility data for each node.  Let $\sE \subset \tsP$ be such a node, joining components $\tsP'$ and $\tsP''$.  To glue automorphisms of $\tsP'$ and $\tsP''$ along $\sE$, we must choose an isomorphism between the induced maps (the identity) $\sE \rightarrow \sE$.  Since $\sE \simeq \BGm$, this amounts to the selection of an element of $\Gm$.  This proves the following lemma.

\begin{lemma}
Let $\tsP$ be an expansion of $\sP$ as in~\eqref{eqn:6} having $n$ nodes.  Then the automorphism group of $\tsP$ is isomorphic to $\Gm^n$.
\end{lemma}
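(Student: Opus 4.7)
The plan is to formalize the two-step argument sketched in the paragraph preceding the lemma: first determine the automorphism group of a single component $\sP$ preserving $\sD_-$ and $\sD_+$, then account for the gluing data at each of the $n$ nodes.

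For the component step, I would verify the claim that $\sP$ admits only the identity as a self-equivalence preserving both $\sD_-$ and $\sD_+$. Working with the presentation $\sP = [\bP^1/\Gm]$, such a self-equivalence lifts to a $\Gm$-equivariant automorphism of $\bP^1$ fixing the torus-fixed points $0$ and $\infty$, and hence has the form $(x,y) \mapsto (\alpha x, \beta y)$ for some $\alpha,\beta \in \Gm$. The ambiguity in choosing the lift is precisely the $\Gm$-action $(x,y)\mapsto (\lambda x, y)$, and passing to the quotient identifies every such automorphism with the identity. Alternatively, via the modular description one checks that any automorphism of a tuple $(L,s,M,t)$ preserving the vanishing loci of $s$ and $t$ lies in the diagonal $\Gm$-action which has already been quotiented out.

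For the gluing step, I would use descent along the normalization $\coprod_i \sP_i \to \tsP$. An automorphism of $\tsP$ is then the data of an automorphism of each $\sP_i$ preserving its boundary divisors, together with a $2$-isomorphism at each node $\sE_i$ matching the restrictions from the two adjacent components. By the first step, the component automorphisms are all the identity, so what remains at each node is a self-equivalence of the identity functor of $\sE_i$. Since $\sE_i \simeq \BGm$, the automorphism group of its identity functor is $\Gm$, and independence across the $n$ nodes yields $\mathrm{Aut}(\tsP) \cong \Gm^n$.

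The main obstacle is executing the component calculation cleanly — in particular, distinguishing between $\Gm$-equivariant automorphisms of the atlas $\bP^1$ and genuine self-equivalences of the quotient stack $\sP$, and verifying that the residual gluing data really is a torsor under $\Gm$ (and not, for instance, canonically trivialized by some extra structure on the nodes). The descent portion of the argument, once the local pictures are in hand, is entirely standard for nodal Deligne--Mumford stacks.
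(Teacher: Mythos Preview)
Your proposal is correct and follows exactly the two-step argument the paper gives in the paragraph immediately preceding the lemma: triviality of the automorphisms of each component $\sP$ fixing $\sD_\pm$, followed by the identification of the gluing ambiguity at each node $\sE \simeq \BGm$ with a copy of $\Gm$. You supply more detail on the component step than the paper (which simply asserts it), and your descent formulation of the gluing step is the standard way to make the paper's sentence precise.
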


\noindent These automorphisms, which are invisible on the underlying topological space of $\tsP$, have sometimes been called \textit{ghost automorphisms}.

We may understand the ghost automorphisms in somewhat more concrete terms by considering the functor of points of $\tsP$.  An $S$-point of $\tsP$ is 
\begin{enumerate}
\item a system of $n$ line bundles and sections $(L_i, s_i)$ and $(M_i, t_i)$, and
\item isomorphisms $(L_i \tensor M_i, s_i \tensor t_i) \simeq (\cO_S, 0)$, 
\end{enumerate}
such that 
\begin{enumerate}[resume*]
\item for all $i$ the sections $s_i$ and $t_{i+1}$ do not vanish simultaneously.\end{enumerate}
The action of $\lambda$ from the $i$-th factor in $\Gm^n$ on these data is to scale the isomorphism $L_i \tensor M_i \rightarrow \cO_S$.

\subsection{The universal family of chains of rational curves}  Using the presentation of $\targets$ in Section~\ref{sec:expansions}, we will now describe the universal family of chains of rational curves.  

Note that projection to the coordinates $(\sigma_0, \tau_{n+1})$ gives a map $\sA^{2n+2} \rightarrow \sA^2$ whose restriction to $\sU_n$ factors through $\sP$.  Since the coordinates $\sigma_0$ and $\tau_{n+1}$ are fixed by the morphisms $\sA^n \rightarrow \sA^m$ in the diagram above, this map descends to give a map $\sP^{\exp} \rightarrow \sP$.  We denote the pre-images of $\sD_-$ and $\sD_+$ by $\sD_-^{\exp}$ and $\sD_+^{\exp}$.  When $\tsP$ is an expansion of $\sP$, we write $\tsD_-$ and $\tsD_+$ for the pre-images of $\sD_-^{\exp}$ and $\sD_+^{\exp}$ in $\tsP$.  Also, there is an obvious map from $\sP^{\exp}$ to $\BGm$ given by pulling back the line bundle $\uHom(\sL,\sM)$ on $\sP$.

We can construct a second map $\sP^{\exp} \rightarrow \BGm$ determined by the line bundle \[\sQ = \uHom(L_1 \tensor \cdots \tensor L_n, M_1 \tensor \cdots \tensor M_n).\]  This line bundle descends to a line bundle on $\sP$ and gives a map $\sP \rightarrow \BGm$.  Let $Q$ be the associated $\Gm$-torsor $\uIsom(L_1 \tensor \cdots \tensor L_n, M_1 \tensor \cdots \tensor M_n)$.  Note that $\sQ$ has degree zero on every component of every fiber of $\sU_n$.  The line bundle $\sQ$ and $\Gm$-torsor $Q$ should be regarded as the expansions to $\sP^{\exp}$ of the line bundle and $\Gm$-torsor on $\sP$ discussed in Section~\ref{sec:rubberP1}.%

We analyze the total space of $Q$.  For each $1 \leq i \leq n+1$ we have a projection $\sU_n \rightarrow \sP$ on to the $i$-th factor:  $(\sigma_0, \ldots, \sigma_n, \tau_1, \ldots, \tau_{n+1}) \mapsto (\sigma_{i-1}, \tau_i)$.  This map has a section sending $\sP$ to the vanishing locus of $\sigma_{i}$ and $\tau_{i-1}$.  In explicit terms, the section sends $(\sigma, \tau)$ to $(\sigma_0, \ldots, \sigma_n, \tau_1, \ldots, \tau_{n+1})$ with%
\footnote{For notational simplicity, we write $0$ and $1$, respectively, for the trivial line bundle with section $0$ or $1$.}
\begin{align*}
\sigma_j & = \begin{cases} 1 & j < i-1 \\ \sigma & j = i-1 \\ 0 & j > i-1 \end{cases} &
\tau_j & = \begin{cases} 0 & j < i \\ \tau & j = i \\ 1 & j > i \end{cases} .
\end{align*}

\noindent Restricting $Q$ via this inclusion, we get a copy of $\uIsom(\sL,\sM) = \bP^1$ via the standard projection to $\sP$.  Then $Q$ is a flat family over $\sA^n$ whose fibers are chains of rational curves and the family $Q \rightarrow \fM_{0,2}^{\rm ss} = \targets \times \BGm$ is the universal family of chains of rational curves.

\subsection{Lifting maps to rubber targets}
\label{sec:lifting}

Let $C\to S$ be a pre-stable curve, and suppose we have a stable map $C\to\tsP$ to an expansion of $\sP$.  These data determine a diagram of solid lines
\begin{equation} \label{eqn:2} \vcenter{\xymatrix{
& \tP \ar[d] \\
C \ar[r] \ar[d]_\pi \ar@{-->}[ur] & \tsP \ar[d] \\
S \ar@{-->}[r] & \BGm.
}} \end{equation}
Here, $\tP$ is the associated expansion of $\bP^1$, i.e., the restriction of $\Gm$-torsor $Q$ to $\tsP$.  Note, however, that the map $\tsP\to \BGm$ is the restriction of our map $\sP^{\exp} \rightarrow \BGm$ determined by the line bundle $\sQ$, thus again represents the restriction of $Q$.  Hence the obstruction to completing the top triangle is precisely the same as the obstruction to completing the bottom square:  it is the class in $R^1 \pi_\ast \cO_C^\ast$ of the restriction of the torsor $Q$ to $C$.

Note furthermore that the restriction of $Q$ to $C$ induces a section of the relative Picard stack $\fP$ of $C/S$.  Giving a lift of diagram~\eqref{eqn:2} amounts to factoring this section through the zero locus of $\fP$.

\section{Moduli spaces}

In this section we describe the two moduli spaces of stable maps in question.  They fit naturally into a cartesian diagram with the universal relative Jacobian and its zero section,  diagram \eqref{diag:cart} below.  

\subsection{Stable maps to rubber targets}
Let $\oM(\sP/\BGm)$ be the stack of rubber maps to chains of rational curves.  An object over a scheme $S$ is a commutative diagram
\begin{equation} \label{eqn:1} \vcenter{\xymatrix{
C \ar[r]^{f} \ar[d] & \sP^{\exp} \ar[d] \\
S \ar[r] & \targets \times \BGm
}} \end{equation}
in which
\begin{enumerate}[label=\textbf{R\arabic{*}}]
\item \label{cond:nodal} $C$ is a pre-stable curve over $S$,
\item \label{cond:predef} $f$ is predeformable, and
\item \label{cond:stable} the automorphism group of all the data is finite.
\end{enumerate}

Let $\oM(\sP)$ be the stack of diagrams
\begin{equation} \label{eqn:3} \vcenter{\xymatrix{
C \ar[r] \ar[d] & \sP^{\exp} \ar[d] \\
S \ar[r] & \targets
}} \end{equation}
with the same conditions as well as the following:%
\footnote{This condition was incorrectly omitted in \cite{CMW}.}
\begin{enumerate}[resume*]
\item \label{cond:degree} Let $\sQ$ be the line bundle defined in Section~\ref{sec:targets}.  Then $f^\ast \sQ$ has degree zero on every irreducible component of every fiber of $C$.
\end{enumerate}
Note that \ref{cond:degree} holds automatically on $\oM(\sP/\BGm)$.

\begin{lemma}
Given a predeformable diagram~\eqref{eqn:3} the locus in $S$ where the diagram satisfies~\ref{cond:degree} is open.
\end{lemma}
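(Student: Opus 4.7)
The plan is to show openness by combining closure under generization with constructibility. The key input is local constancy of Euler characteristic in a flat proper family of one-dimensional schemes.

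For closure under generization, fix a DVR $R$ and a morphism $\mathrm{Spec}\,R \to S$ sending the closed point $s_0$ to a point where \ref{cond:degree} holds and the generic point to $s$; the goal is to verify \ref{cond:degree} at $s$. Given any irreducible component $E \subset C_s$, form its schematic closure $\overline{E} \subset C \times_S \mathrm{Spec}\,R$. Since $\overline{E}$ is an integral subscheme dominating the base, it is flat over $R$; its special fiber $\overline{E}_0$ is a one-dimensional subscheme of $C_{s_0}$ supported on some components $D_1, \ldots, D_l$ of $C_{s_0}$ with generic multiplicities $m_j$. Applying local constancy of Euler characteristic to $f^\ast\sQ$ and to $\cO$ separately on the family $\overline{E} \to \mathrm{Spec}\,R$,
\[
\deg(f^\ast\sQ|_E) = \chi(f^\ast\sQ|_E) - \chi(\cO_E) = \chi(f^\ast\sQ|_{\overline{E}_0}) - \chi(\cO_{\overline{E}_0}) = \deg(f^\ast\sQ|_{\overline{E}_0}).
\]
A dévissage of $\overline{E}_0$ along its nilradical rewrites the right-hand side as $\sum_j m_j \deg(f^\ast\sQ|_{D_j})$; each summand vanishes by hypothesis at $s_0$, so \ref{cond:degree} holds at $s$.

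Constructibility is standard: stratifying $S$ by the topological type of $C$, on each locally closed stratum the multidegrees of $f^\ast\sQ$ on components of fibers of $C$ are locally constant functions, so the condition cuts out a finite union of open subsets of locally closed strata. Combined with closure under generization, this gives openness.

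The main technical point is the multiplicity-weighted additivity $\deg(f^\ast\sQ|_{\overline{E}_0}) = \sum_j m_j \deg(f^\ast\sQ|_{D_j})$. It follows from iterated short exact sequences for $f^\ast\sQ$ along the filtration of $\cO_{\overline{E}_0}$ by powers of its nilradical, together with the observation that the intersections between distinct components $D_j$ are zero-dimensional and contribute nothing to the degree.
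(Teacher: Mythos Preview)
Your proof is correct and follows the same overall strategy as the paper's: show that the locus is constructible and stable under generization. The paper's generization step is terser---it simply asserts that the degree of $f^\ast\sQ$ on a component of the generic fiber equals the sum of its degrees on the irreducible components of the special fiber of the closure---whereas you supply the justification via constancy of Euler characteristics in a flat proper family together with a d\'evissage along the nilradical. (In the present nodal setting the multiplicities $m_j$ are in fact all equal to $1$, so your extra care is harmless but unnecessary.) One small omission: the paper first reduces, by local finite presentation, to the case where $S$ is of finite type; this is what guarantees that ``constructible and stable under generization'' actually implies open.
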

\begin{proof}
By local finite presentation, we may assume $S$ is of finite type.  We argue that the locus in question is constructible and stable under generization.

We may stratify $S$ into locally closed subsets on which the dual graphs of $C$ and of $\sP$ are constant.  Then the degree of $f^\ast \sQ$ on each irreducible component of $C$ is a locally constant function on each stratum, thus a constructible function on $S$.

Now consider a geometric point $s$ of $S$ that is a specialization of $\eta$.  Let $C'$ be the closure of an irreducible component of the generic fiber.  Then the degree of $f^\ast \sQ$ on $C'_\eta$ is the sum of the degrees of $f^\ast \sQ$ on the irreducible components of $C'_s$, hence is zero.
\end{proof}

Note that if we use the notation $\fM(\,\cdots)$ to refer to the stacks obtained by omitting the finiteness requirement~\ref{cond:stable} on automorphism groups, then we have an obvious map
\begin{equation*}
\fM(\sP/\BGm) \rightarrow \fM(\sP)
\end{equation*}
by forgetting the map $S \rightarrow \BGm$.  In fact, the following lemma shows that this map carries stable objects to stable objects, so provides us with a map
\begin{equation*}
\oM(\sP/\BGm) \rightarrow \oM(\sP) .
\end{equation*}


\subsection{Infinitesimal automorphisms}

In this section, we will study the infinitesimal automorphism group of a predeformable map $f : C \rightarrow \tsP$.  By definition, an automorphism of such a map is a commutative diagram
\begin{equation*} \xymatrix{
C \ar[r] \ar[d]_<>(0.5)f & C \ar[d]^<>(0.5)f \\
\tsP  \ar[r] & \tsP .
} \end{equation*}
We shall assume that $(C, \tsP, f)$ satisfies~\ref{cond:nodal}, \ref{cond:predef}, and \ref{cond:degree}.  One obvious source of infinitesimal automorphisms is from unstable components of $C$ that are contracted by $f$.  As these are easy to understand, we will be more interested in the automorphisms of $(C, \tsP, f)$ that restrict to nontrivial automorphisms of $\tsP$.

Recall from Sections~\ref{sec:expansions} and \ref{sec:aut-exp} that the map $f$ is specified by $2n + 2$ line bundles and sections
\begin{equation*}
(L_0, s_0), \ldots, (L_n, s_n), (M_1, t_1), \ldots, (M_{n+1}, t_{n+1})
\end{equation*}
as well as isomorphisms $\varphi_i : L_i \tensor M_i \xrightarrow{\sim} \cO_C$ for all $i$, satisfying the following conditions:
\begin{enumerate}
\item $s_i$ and $t_j$ do not vanish simultaneously for $i < j$,
\item $s_i \tensor t_i = 0$ for all $i$,
\item $s_i$ and $t_i$ do not vanish identically on any irreducible component of $C$, and
\item the orders of vanishing of $t_i$ and $s_{i}$ are the same on opposite sites of a node of $C$.
\end{enumerate}
The first two conditions come from the functor of points of $\tsP$; the last two come are predeformability, which says in particular that no irreducible component of $C$ is carried by $f$ into a node of $\tsP$ (namely, the vanishing locus of $s_i$ and $t_i$).

Furthermore, recall that the automorphism group of $\tsP$ is $\Gm^n$ with the $i$-th factor acting by scaling $\varphi_i$.  For a fixed infinitesimal automorphism $\boldsymbol{\lambda} = (\lambda_1, \ldots, \lambda_n) \in \Gm^n$ of $\tsP$, consider the infinitesimal automorphisms of the triple $(C, \tsP, f)$ restricting to $\boldsymbol{\lambda}$ on $\tsP$.  To give such an automorphism, we must give an automorphism $\sigma$ of $C$ and isomorphisms 
\begin{gather*}
u_i : \sigma^\ast (L_i, s_i) \xrightarrow{\sim} (L_i, s_i) \\
v_i : \sigma^\ast (M_i, t_i) \xrightarrow{\sim} (M_i, t_i)
\end{gather*}
such that the diagrams
\begin{equation*} \xymatrix{
\sigma^\ast L_i \tensor \sigma^\ast M_i \ar[r]^<>(0.5){\sigma^\ast \varphi_i} \ar[d]_{u_i \tensor v_i} & \cO \ar@{=}[d] \\
L_i \tensor M_i \ar[r]^<>(0.5){\lambda_i \varphi_i} & \cO
} \end{equation*}
commute for all $i$.  It follows from~\ref{cond:predef} and~\ref{cond:degree} that none of the $s_i$ vanish identically on $C$.  Therefore the maps $u_i$ are uniquely determined from $\sigma$, provided they exist.  The same applies to the $v_i$.

Consider an irreducible component $D$ of $C$.   If $D$ is not semistable then $\sigma$ acts trivially on $D$, in which case we set $\mu_D = 1$.  Otherwise $\sigma$ acts by scaling and we set $\mu_D$ equal to the scaling factor.  If $i$ is the unique index such that $s_i$ and $t_{i+1}$ do not vanish identically on $D$ then $\sigma^\ast (L_i, s_i) \simeq (L_i, \mu_D^{r_D} s_i)$ and $\sigma^\ast (M_{i+1}, t_{i+1}) \simeq (M_{i+1}, \mu_D^{-{r_D}} t_{i+1})$, where $r_D$ is the common order of vanishing of both $s_i$ and $t_{i+1}$ on $D$.  It follows that $\mu_D^{r_D}$ must be the same for all irreducible components $D$ of $C$ with the same image in $\tsP$.  We may therefore introduce the notation $\mu_i$ for the common value of $\mu_D^{r_D}$ on all irreducible components on which $s_i$ vanishes but does not vanish identically.%

The map $\sigma^\ast \varphi_i \circ (u_i \tensor v_i)^{-1} : L_i \tensor M_i \rightarrow \cO$ is $\mu_i \mu_{i-1}^{-1} \varphi_i$.  We therefore get the equation $\mu_i \mu_{i-1}^{-1} = \lambda_i$.  The $\mu_i$ therefore determine the $\lambda_i$, and, except for the geometric constraints from $C$ already described, the $\mu_i$ may be arbitrary.

We may finally conclude:
\begin{lemma}
Let $f : C \rightarrow \tsP$ be a predeformable morphism from a connected, nodal curve to an expansion $\tsP$ of $\sP$.  Assume that $f$ satisfies~\ref{cond:predef} and~\ref{cond:degree}.  Then $(C, \tsP, f)$ has an infinite automorphism group if and only if either
\begin{enumerate}
\item there is an irreducible component of $C$ that is contracted by $f$ and has genus $g$ with $n$ special points and $2g - 2 + n \leq 0$, or
\item there is an irreducible component of $\tsP$ whose pre-image in $C$ consists entirely of semistable components.
\end{enumerate}
\end{lemma}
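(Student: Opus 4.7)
The strategy is to package the analysis carried out in the paragraphs above into a combinatorial count. That analysis shows that an automorphism of $(C, \tsP, f)$ is the same datum as an automorphism $\sigma$ of $C$ of a very constrained type: $\sigma$ acts by some $\alpha_D \in \mathrm{Aut}(D)$ on each contracted component $D$, is the identity on each non-contracted, non-semistable component, and acts by a scaling $\mu_D \in \Gm$ on each non-contracted semistable component. The only coupling between these choices is that the quantity $\mu_D^{r_D}$ must depend solely on the image component $P_j$ of $\tsP$ (giving a well-defined $\mu_j$), after which the ghost automorphism is recovered as $\lambda_j = \mu_j \mu_{j-1}^{-1}$.

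For the $(\Leftarrow)$ implication I would treat the two cases independently. If $D$ is a contracted component with $2g - 2 + n \leq 0$, then $\mathrm{Aut}(D)$ is positive-dimensional, and letting $\alpha_D$ vary while setting every other $\alpha_{D'} = \id$ and every $\mu_{D'} = 1$ produces an infinite family of automorphisms of $(C, \tsP, f)$. If instead the pre-image of some $P_j$ is entirely semistable, say $D_1, \dots, D_k$, then I would let $\mu_j \in \Gm$ vary freely, pick an $r_{D_1}$-th root $\mu_{D_1}$ of $\mu_j$ (and compatible $\mu_{D_l}$ for $l \geq 2$), set $\mu_{j'} = 1$ for $j' \neq j$ and $\mu_{D'} = 1$ on every other semistable component, and define the ghost part by $\lambda_j = \mu_j$, $\lambda_{j+1} = \mu_j^{-1}$, and $\lambda_i = 1$ otherwise. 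The $\mu$-compatibility constraints are satisfied by construction, producing a one-parameter family of automorphisms.

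For $(\Rightarrow)$, assume neither condition holds. Then every contracted component of $C$ is stable, hence contributes only finitely many choices of $\alpha_D$; and every component $P_j$ of $\tsP$ contains at least one non-contracted, non-semistable component $D_j$ in its pre-image. Since $\sigma|_{D_j} = \id$ is forced, we have $\mu_{D_j} = 1$ and therefore $\mu_j = 1$ for every $j$. Consequently $\mu_D^{r_D} = 1$ on every semistable component $D$, which confines $\mu_D$ to a finite group of roots of unity. Together with the finite data describing which components of $C$ get permuted among themselves, this shows that the automorphism group of $(C, \tsP, f)$ is finite.

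The main point that requires care is the $(\Leftarrow)$ direction in the presence of a component $P_j$ with entirely semistable pre-image: one must confirm that a free choice of $\mu_j \in \Gm$ can be upgraded to a genuine automorphism of the whole triple without running into an obstruction transmitted through the ghost relations $\lambda_i = \mu_i \mu_{i-1}^{-1}$. As noted above, this causes no difficulty because those relations merely \emph{define} the $\lambda$'s from the $\mu$'s rather than constraining them.
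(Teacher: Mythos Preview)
Your proposal is correct and follows essentially the same route as the paper. The paper does not give a standalone proof of the lemma; it simply writes ``We may finally conclude:'' after the infinitesimal-automorphism analysis, so the intended proof \emph{is} that analysis. You have made this explicit by unpacking the two implications, using exactly the constraints the paper derived: that $\sigma$ is trivial on non-semistable components, that $\mu_D^{r_D}$ depends only on the image component (defining $\mu_j$), and that $\lambda_i = \mu_i\mu_{i-1}^{-1}$ merely recovers the ghost automorphism from the $\mu$'s rather than constraining them.

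Two small remarks. First, in $(\Leftarrow)$ case~(2), the phrase ``pick an $r_{D_1}$-th root'' is a little informal for exhibiting a one-parameter subgroup; the cleaner statement (matching the paper's emphasis on \emph{infinitesimal} automorphisms) is to work in the Lie algebra, where the relation becomes $r_D\,\dot\mu_D = \dot\mu_j$ and has an obvious solution. Second, in $(\Rightarrow)$ you silently use that ``pre-image of $P_j$'' means the components of $C$ mapping dominantly to $P_j$; this is indeed the relevant notion, since the paper's $\mu_i$ is defined via components on which $s_i$ vanishes but not identically, which excludes contracted components. With that reading your pinning $\mu_j=1$ via a stable non-contracted $D_j$ is exactly the mechanism the paper has in mind.
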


\begin{corollary}
A point of $\fM(\sP/\BGm)$ is satisfies~\ref{cond:stable} if and only if its image in $\fM(\sP)$ does.
\end{corollary}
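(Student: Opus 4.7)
The plan is to show that the forgetful morphism $\fM(\sP/\BGm) \to \fM(\sP)$ induces an isomorphism on automorphism groups at every geometric point; the corollary then follows immediately.  Fix a geometric point $x$ of $\fM(\sP/\BGm)$ over $S = \operatorname{Spec} k$ with image $\bar x$ in $\fM(\sP)$.  By the preceding analysis, an automorphism of $\bar x$ is a tuple $(\sigma, \boldsymbol\lambda, u_i, v_i)$, while an automorphism of $x$ is such a tuple augmented by an automorphism $\nu \in \Gm$ of the $\BGm$-component of $S \to \targets \times \BGm$, subject to one additional compatibility forced by $2$-commutativity of the defining square.

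First I would unwind this extra compatibility concretely.  The composite $C \to \sP^{\exp} \to \BGm$ is classified by $f^\ast \sQ$, while the composite $C \to S \to \BGm$ is classified by $\pi^\ast N$, where $N$ is the line bundle on $S$ corresponding to $S \to \BGm$.  The structural $2$-cell of the diagram supplies an isomorphism $\alpha : \pi^\ast N \xrightarrow{\sim} f^\ast \sQ$, and compatibility of the lifted automorphism asks that $\alpha$ intertwine $\pi^\ast \nu$ with the automorphism of $f^\ast \sQ$ induced by $(u_i, v_i)$.

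The main content is then the observation that, because $C$ is a connected proper curve over $k$, we have $\operatorname{Aut}(f^\ast \sQ) = H^0(\cO_C^\ast) = k^\ast$; in particular the action of $(u_i, v_i)$ on $f^\ast \sQ$ is a single global scalar, and any such scalar has the form $\pi^\ast \nu$ for a unique $\nu \in \Gm$.  Hence every automorphism of $\bar x$ lifts uniquely to an automorphism of $x$, yielding the desired bijection $\operatorname{Aut}(x) \to \operatorname{Aut}(\bar x)$, and therefore one automorphism group is finite exactly when the other is.

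The step I expect to be most subtle is precisely this last observation: a priori one might worry that the extra $\BGm$-factor contributes a whole independent copy of $\Gm$ to the automorphism group, which would simply break the corollary.  The saving point is that $2$-commutativity, together with the fact that a line bundle on a connected proper curve admits only global scalars as its automorphisms, pins $\nu$ down completely in terms of the remaining data, so no spurious infinitesimal symmetries appear.
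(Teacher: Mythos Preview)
Your argument is correct and is exactly the content the paper leaves implicit.  In the paper the corollary is stated without proof, as an immediate consequence of the preceding lemma; the lemma itself only analyzes automorphisms of $(C,\tsP,f)$, i.e.\ the $\fM(\sP)$ side, and the passage to $\fM(\sP/\BGm)$ is not spelled out.  Your proposal supplies precisely the missing step: the extra datum $\nu\in\Gm$ coming from the $\BGm$-factor is pinned down by $2$-commutativity together with $H^0(C,\cO_C^\ast)=k^\ast$ for a connected proper curve, so the forgetful map induces a bijection on automorphism groups.  This is a slightly sharper statement than the corollary requires (you get an actual isomorphism of automorphism groups, not merely equivalence of finiteness), and it is the natural justification one would give if asked to expand the paper's claim.

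One small remark: in the paper's explicit parameterization, the infinitesimal automorphisms are governed by the scalars $\mu_i$, with $\lambda_i=\mu_i\mu_{i-1}^{-1}$ determined by them; your observation amounts to saying that $\nu$ is likewise a fixed word in the $\mu_i$ (the induced scalar on $f^\ast\sQ$), so no new free parameter appears.  Either phrasing yields the same conclusion.
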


\subsection{The cartesian diagram}
Let $\fM$ be the Artin stack of Deligne-Mumford pre-stable curves.  The moduli spaces $\oM(\sP/\BGm)$ and $\oM(\sP)$ live naturally over $\fM$ by forgetting all but the source curve of a stable map.  Denote by $\fJ$ the universal Jacobian over $\fM$ and $\fZ$ the image of its zero section, which is of course isomorphic to $\fM$.

It is immediate from the definitions that the obstruction to lifting an $S$-point of $\oM(\sP)$ to an $S$-point of $\oM(\sP/\BGm)$ is the map $C \rightarrow \sP^{\exp} \rightarrow \BGm$:  this composition factors through $S$ if and only if a lift exists (see Section~\ref{sec:lifting}).  We therefore have a cartesian diagram
\begin{equation} \label{diag:cart} \vcenter{\xymatrix{
\oM(\sP/\BGm) \ar[r] \ar[d] & \fZ \ar[d] \\
\oM(\sP) \ar[r] & \fJ.
}} \end{equation}  
Note that the map $\oM(\sP) \rightarrow \fJ$ takes values in $\fJ$ and not merely the universal Picard because of condition~\ref{cond:degree}.  Since $\fZ$ is pulled back from the zero section of the universal relative Jacobian over the moduli space of stable curves, we also obtain a cartesian diagram
\begin{equation} \label{eqn:4} \vcenter{\xymatrix{
\oM(\sP/\BGm) \ar[r] \ar[d] & Z \ar[d] \\
\oM(\sP) \ar[r] & J
}} \end{equation}
where $J$ is the universal relative Jacobian over $\oM$ and $Z \simeq \oM$ is the image of its zero section. 

\subsection{Virtual classes} 

\begin{theorem}[{\cite[Proposition~3.5 and Corollary~4.10]{CMW}}]
The following cycle classes on $\oM(\sP/\BGm)$ coincide:
\begin{enumerate*}[label=(\roman{*})]
\item the virtual fundamental class, as studied in~\cite{GV},
\item the relative virtual fundamental class over $\oM(\sP)$, as defined in~\cite{CMW}, and
\item the Gysin pullback of the cycle $Z \subset J$ in diagram~\eqref{eqn:4}.
\end{enumerate*}
\end{theorem}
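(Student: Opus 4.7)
My plan is to verify the equalities (ii) = (iii) and (i) = (ii) separately, using the cartesian diagram~\eqref{eqn:4} and the analysis of Section~\ref{sec:lifting} as the main inputs.

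For (ii) = (iii): The zero section $Z \hookrightarrow J$ is a regular embedding of codimension $g$, and diagram~\eqref{eqn:4} is cartesian. The relative virtual class (ii) arises from a relative perfect obstruction theory $\mathbb{E}$ for $\oM(\sP/\BGm) \to \oM(\sP)$. By the discussion in Section~\ref{sec:lifting}, a lift of diagram~\eqref{eqn:2} is precisely a factorization of the section of $\fP$ induced by $Q|_C$ through the zero section, whose obstruction is governed by $R^1\pi_*\cO_C$. I would identify $\mathbb{E}$ with the pullback of the conormal complex of $Z \hookrightarrow J$, and then invoke Manolache's virtual pullback formalism (or the Behrend--Fantechi cone construction) to conclude that the relative virtual class coincides with the Gysin pullback $[Z]^!\,[\oM(\sP)]$.

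For (i) = (ii): The absolute virtual class of Graber--Vakil is defined by the absolute perfect obstruction theory of $\oM(\sP/\BGm)$ relative to a smooth base carrying the pre-stable curve and the universal expansion. I would fit this absolute obstruction theory into a distinguished triangle with the relative obstruction theory over $\oM(\sP)$ and the obstruction theory of $\oM(\sP) \to \fJ$. The compatibility at the level of targets comes from the exact triangle of relative tangent complexes
\begin{equation*}
T_{\sP^{\exp}/\targets \times \BGm} \to T_{\sP^{\exp}/\targets} \to p^* T_{\BGm}
\end{equation*}
on $\sP^{\exp}$, pushed forward under $R\pi_* f^*(\cdot)$. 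Functoriality of virtual pullback along the composition $\oM(\sP/\BGm) \to \oM(\sP) \to \fM$ then yields (i) = (ii).

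The main obstacle is identifying $\mathbb{E}$ with the pullback of the conormal complex of $Z \hookrightarrow J$. This requires a concrete handle on the obstruction space governing the lifting problem~\eqref{eqn:2} and a verification that the ghost automorphisms of $\tsP$---parametrized by $\Gm^n$ per the lemma of Section~\ref{sec:aut-exp}---do not contribute additional obstructions beyond those already captured by $R^1\pi_*\cO_C$, once~\ref{cond:predef} and~\ref{cond:degree} are enforced. One should also check that these conditions are preserved throughout so that all three classes genuinely live on the same stack.
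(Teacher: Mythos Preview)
The paper does not give an independent proof of this statement.  It is quoted directly from \cite[Proposition~3.5 and Corollary~4.10]{CMW}, and the only argument supplied here is the remark that the proof in \cite[Section~4]{CMW} remains valid for all nodal curves once one makes the notational substitutions $\widetilde{\sT} \rightsquigarrow \sV \times \BGm$, $\sT^2 \rightsquigarrow \sV$, $\widetilde{\sP} \rightsquigarrow \sP^{\exp}$, and $\oM_{\rm rel} \rightsquigarrow \oM$.  In other words, the paper's ``proof'' is a pointer to loc.\ cit.\ together with the observation that nothing there was genuinely specific to rational tails.

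Your two-step outline---(ii)$=$(iii) by identifying the relative obstruction theory for $\oM(\sP/\BGm) \to \oM(\sP)$ with the pullback of the conormal of $Z \subset J$, and (i)$=$(ii) by a compatible triangle of obstruction theories plus functoriality of virtual pullback---is in fact the shape of the argument carried out in \cite{CMW}, so you have correctly reconstructed the cited proof rather than produced a different one.  Two places where your sketch would need real work to become a proof: Section~\ref{sec:lifting} only identifies the \emph{obstruction space} $R^1\pi_\ast \cO_C$ for the lifting problem, not the full two-term complex together with its map to the relative cotangent complex, and establishing that identification (compatibly with the predeformability condition at the nodes) is where most of the effort in \cite[Section~4]{CMW} actually goes.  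Similarly, the compatible triangle for (i)$=$(ii) requires handling the deformation theory of predeformable maps, which is not the naive $R\pi_\ast f^\ast T$; this is again dealt with in \cite{CMW} but is invisible from the present paper.  Your closing caveat about ghost automorphisms is well placed---that is exactly the sort of issue the cited argument must and does address.
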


We regret that even though the arguments of~\cite[Section~4]{CMW} are valid for all nodal curves, some of the notation employed there was specific to the rational tails case.  We have adopted better notation here, and we have also abbreviated some other symbols.  To bring the notation of loc.\ cit.\ in line with the present paper, the following substitutions should be made:

\begin{center}
\begin{tabular}{c|c}
\cite{CMW} & this paper \\
\hline\\[-1.5ex]
$\widetilde{\sT}$ & $\sV \times \BGm$ \\ [1.5ex]
$\sT^2$ & $\sV$ \\[1.5ex]
$\widetilde{\sP}$ & $\sP^{\exp}$ \\[1.5ex]
$\oM_{\rm rel}$ & $\oM$\\
\end{tabular}
\end{center}
With these substitutions, the proof goes through otherwise unchanged.  Note that $\sV$ is not isomorphic to $\sT^2$---the latter parameterizes expansions of $\sP$ with a distinguished component---though there is an \'etale map $\sT^2 \rightarrow \sV$.%
\footnote{The use of $\sT^2$ instead of $\sV$ was the technical mistake that limited us to rational tails in \cite{CMW}.}

\section{Proof of the main theorem}

We now prove our comparison theorem over the compact type locus.  There are two steps.  The first is the construction of a section $\oM^{\rm ct} \rightarrow J$ of the universal relative Jacobian through which our map $\oM(\sP)\to \fJ$ factors.  The second step is a pullback of diagram~\eqref{diag:cart} via this factorization and an application of Costello's theorem~\cite[Theorem~5.0.1]{Costello}.  
\subsection{The Abel map}  

We recall the construction of the Abel map over $\oM^{\rm ct}$.  Given a curve $C$ with neutral integral weights $\bx$ we show that $\cO_C(\sum x_i p_i)$ can be adjusted by a sum of boundary divisors (of the universal compact type curve) to have degree zero on every component of $C$.  This yields a section of the universal relative Jacobian of $\oM^{\rm ct}$.

\begin{construction}
Suppose that $C$ is a stable curve of compact type and $L$ a line bundle on $C$.  By twisting with boundary divisors, we construct from $L$ a line bundle that has degree zero on every component of $C$. 

We proceed by induction on the number of components of $C$, always ensuring that the union of the components of $C$ on which $L$ has nonzero degree is a subtree $T$ of the dual graph of $C$.  If $T$ is reduced to a single component of $C$ then we are done, since the total degree of $L$ is zero; we can therefore assume that $T$ has at least two vertices.

Choose a leaf $C_0$ of $T$, and let $x$ be the node at which $C_0$ is joined to the rest of $T$.  Since $C$ is of compact type, there is a canonical line bundle $M$ on $C$ with section $s$ such that $s$ vanishes exactly along the connected component of $C \smallsetminus \{ x \}$ containing $C_0$.  Therefore $M \big|_{C_0}$ has degree $-1$, and $M$ has degree zero on every component of $T$ other than the one meeting $C_0$.  But if $d = \deg L \big|_{C_0}$ then $M^{\tensor d} \tensor L$ has degree zero on every component in $T \cup \{ C_0 \}$.  This completes the inductive step.
\end{construction}

This construction makes sense in an \'etale neighborhood of a point in the base of a versal family of curves of compact type.  Moreover, the line bundle constructed this way is unique up to tensoring with a line bundle pulled back from the base.  Indeed, suppose we had found two line bundles $L_1$ and $L_2$ by proceeding as above in different orders through the components of $C$.  Then $L_1 \tensor L_2^\vee$ is a sum of boundary components in $C$ that has degree zero on every component of the central fiber of $C$.  It is now easy to see that $L_1 \tensor L_2^\vee$ must be pulled back from the base.  Indeed, all boundary divisors of $C$ appearing in $L_1 \tensor L_2^\vee$ must occur with the same multiplicity.

\subsection{Comparison of classes} Given $C$ in $\oMct$ with neutral integral weights $\bx$ of its markings, the construction described above gives a line bundle $L$ that differs from  $\cO_C(\sum x_i p_i)$ by a sum of boundary components and has degree zero on every component.  This determines a section of the universal Jacobian over $\oMct$.

Consider the map $\oMct(\sP) \rightarrow \oMct$ sending a map $f : C \rightarrow \tsP$ to the stabilization of $C$, marked with $D = f^{-1}(\sD_+^{\exp}) - f^{-1}(\sD_-^{\exp})$ and corresponding weights according to the multiplicity of the divisors.  By composition with the construction above, we obtain a map
\begin{equation*} 
\oMct(\sP) \rightarrow \oMct \rightarrow \fJ .
\end{equation*}
On the other hand, the map $C \rightarrow \sP^{\exp} \rightarrow \BGm$ (the latter map given by the torsor $Q$) gives another map $\oM(\sP) \rightarrow \fJ$.  But recall that the line bundle $\sQ$ on $\sP$ was constructed by adding a sum of boundary divisors to $\sD_+^{\exp} - \sD_-^{\exp}$.  Therefore $f^\ast \sQ$ differs from $D$ by a sum of boundary divisors and the two maps $C \rightarrow \BGm$ must coincide.  We conclude that we have a cartesian diagram
\begin{equation*} \xymatrix{
\oMct(\sP / \BGm) \ar[r] \ar[d] & \DRsp \ar[r] \ar[d] & \fZ \ar[d] \\
\oMct(\sP) \ar[r] & \oMct \ar[r] & \fJ
} \end{equation*}
where $\DRsp = \oMct \mathop{\times}_{\fJ} \fZ$.

The bottom horizontal arrow on the left is birational by \cite[Proposition~3.4]{CMW}.%
\footnote{The reader who would prefer not to rely on orbifold stable maps here may replace them with logarithmic stable maps in the argument of loc.\ cit.}
We may now apply Costello's theorem~\cite[Theorem~5.0.1]{Costello} to complete the proof.

\bibliographystyle{amsalpha}
\bibliography{rubjac}

\providecommand{\bysame}{\leavevmode\hbox to3em{\hrulefill}\thinspace}
\providecommand{\MR}{\relax\ifhmode\unskip\space\fi MR }
\providecommand{\MRhref}[2]{%
  \href{http://www.ams.org/mathscinet-getitem?mr=#1}{#2}
}
\providecommand{\href}[2]{#2}
\begin{thebibliography}{CMW12}

\bibitem[ACFW]{ACFW}
Dan Abramovich, Charles Cadman, Barbara Fantechi, and Jonathan Wise,
  \emph{Expanded degenerations and pairs}.

\bibitem[BSSZ12]{BSSZ}
A.~Buryak, S.~Shadrin, L.~Spitz, and D.~Zvonkine, \emph{Integrals of
  psi-classes over double ramification cycles}.

\bibitem[CJM11]{CJM}
Renzo Cavalieri, Paul Johnson, and Hannah Markwig, \emph{Wall crossings for
  double {H}urwitz numbers}, Adv. Math. \textbf{228} (2011), no.~4, 1894--1937.
  \MR{2836109}

\bibitem[CMW12]{CMW}
Renzo Cavalieri, Steffen Marcus, and Jonathan Wise, \emph{Polynomial families
  of tautological classes on {$\mathscr{M}_{g,n}^{rt}$}}, J. Pure Appl. Algebra
  \textbf{216} (2012), no.~4, 950--981. \MR{2864866 (2012j:14019)}

\bibitem[Cos06]{Costello}
Kevin Costello, \emph{Higher genus {G}romov-{W}itten invariants as genus zero
  invariants of symmetric products}, Ann. of Math. (2) \textbf{164} (2006),
  no.~2, 561--601. \MR{2247968 (2007i:14057)}

\bibitem[EGH00]{EGH}
Y.~Eliashberg, A.~Givental, and H.~Hofer, \emph{Introduction to symplectic
  field theory}, Geometric and Functional Analysis (2000), no.~Special Volume,
  Part II, 560--673.

\bibitem[FP05]{FP}
C.~Faber and R.~Pandharipande, \emph{Relative maps and tautological classes},
  J. Eur. Math. Soc. (JEMS) \textbf{7} (2005), no.~1, 13--49. \MR{MR2120989
  (2005m:14046)}

\bibitem[GJV05]{GJV:DHN}
I.~P. Goulden, D.~M. Jackson, and R.~Vakil, \emph{Towards the geometry of
  double {H}urwitz numbers}, Adv. Math. \textbf{198} (2005), no.~1, 43--92.
  \MR{2183250 (2006i:14023)}

\bibitem[GJV11]{GJV}
\bysame, \emph{The moduli space of curves, double {H}urwitz numbers, and
  {F}aber's intersection number conjecture}, Ann. Comb. \textbf{15} (2011),
  no.~3, 381--436. \MR{2836449 (2012h:14129)}

\bibitem[GV03]{GVearly}
Tom Graber and Ravi Vakil, \emph{Hodge integrals and {H}urwitz numbers via
  virtual localization}, Compositio Math. \textbf{135} (2003), no.~1, 25--36.
  \MR{MR1955162 (2004c:14108)}

\bibitem[GV05]{GV}
\bysame, \emph{Relative virtual localization and vanishing of tautological
  classes on moduli spaces of curves}, Duke Math. J. \textbf{130} (2005),
  no.~1, 1--37. \MR{2176546 (2006j:14035)}

\bibitem[GZ12a]{GZ1}
Samuel Grushevsky and Dmitry Zakharov, \emph{The double ramification cycle and
  the theta divisor}.

\bibitem[GZ12b]{GZ2}
\bysame, \emph{The zero section of the universal semiabelian variety, and the
  double ramification cycle}.

\bibitem[Hai11]{H}
R.~Hain, \emph{Normal functions and the geometry of moduli spaces of curves},
  Handbook of Moduli ({G}avril {F}arkas and {I}an {M}orrison, editors),
  arXiv:1102.4031, 2011.

\bibitem[SSV08]{SSV}
S.~Shadrin, M.~Shapiro, and A.~Vainshtein, \emph{Chamber behavior of double
  {H}urwitz numbers in genus 0}, Adv. Math. \textbf{217} (2008), no.~1, 79--96.
  \MR{2357323 (2008g:14044)}

\end{thebibliography}

\end{document}